\newtheorem{Thm}{Theorem}[section]
\newtheorem{Cor}[Thm]{Corollary}
\newtheorem{Lem}[Thm]{Lemma}
\newtheorem{Prop}[Thm]{Proposition}
\theoremstyle{definition}
\theoremstyle{remark}
   \renewcommand{\hat}[1]{\overline{#1}}
\def \eps{\varepsilon}
\def\Ndb{\mathbb N}
\def\Rdb{\mathbb R}
\begin{document}
\title{Isometric embeddings of compact spaces into Banach spaces}

\author {Y. Dutrieux}
\address{Universit\'e de Franche-Comt\'e, Laboratoire de Math\'ematiques UMR 6623,
16 route de Gray, 25030 Besan\c con Cedex, FRANCE.}
\email{yves.dutrieux@univ-fcomte.fr}

\author {G. Lancien}
\address{Universit\'e de Franche-Comt\'e, Laboratoire de Math\'ematiques UMR 6623,
16 route de Gray, 25030 Besan\c con Cedex, FRANCE.}
\email{gilles.lancien@univ-fcomte.fr}

\begin{abstract}
We show the existence of a compact metric space $K$ such that whenever $K$
embeds isometrically into a Banach space $Y$, then any separable Banach space
is linearly isometric to a subspace of $Y$. We also address the following
related question: if a Banach space $Y$ contains an isometric copy of the unit
ball or of some special compact subset of a separable Banach space $X$, does it
necessarily contain a subspace isometric to $X$? We answer positively this
question when $X$ is a polyhedral finite-dimensional space, $c_0$ or $\ell_1$.
\end{abstract}

\subjclass[2000]{46B04, 46B20}

\maketitle

\section{Introduction}\label{intro}
This paper is motivated by questions about universal Banach spaces. In 1927,
P.S.~Urysohn \cite{U} was the first to give an example of a separable metric
space $\mathbb U$ such that every separable metric space is isometric to a
subset of $\mathbb U$ (we say that $\mathbb U$ is isometrically universal).
However the foundation of the questions about universal Banach spaces is the
theorem of S.~Banach and S.~Mazur \cite{B} asserting that every separable
Banach space is linearly isometric to a subspace of $C([0,1])$ and therefore
every separable metric space is isometric to a subset of $C([0,1])$. It is then
natural to wonder what are the Banach spaces that are isometrically universal
for smaller classes of Banach spaces or metric spaces. For instance,
G.~Godefroy and N.J.~Kalton proved very recently in \cite{GK2} that if a
separable Banach space contains an isometric copy of every separable strictly
convex Banach space, then it contains an isometric copy of every separable
Banach space. On the other hand, in another recent work, N.J.~Kalton and the
second named author showed that every metric space with relatively compact
balls embeds almost isometrically into the Banach space $c_0$. The main result
of this paper is that a Banach space containing isometrically every compact
metric space must contain a subspace linearly isometric to $C([0,1])$.

The techniques that we use come from classical results on isometries between
Banach spaces. The first of them is of course the well known result of S.~Mazur
and S.~Ulam \cite{MU} who proved that a surjective isometry between two Banach
spaces is necessarily affine. In other words, the linear structure of a Banach
space is completely determined by its isometric structure. Then, one naturally
wonders about what can be said when a Banach space $X$ is isometric to a subset
of a Banach space $Y$. The first fundamental result in this direction is due to
T.~Figiel who showed in \cite{F} that if $j:X\to Y$ is an isometric embedding
such that $j(0)=0$ and $Y$ is the closed linear span of $j(X)$, then there is
linear quotient $Q:Y\to X$ of norm one and so that $Q\circ j=Id_X$. More recently, as
an application of their work on Lipschitz-free Banach spaces, G.~Godefroy and
N.J.~Kalton \cite{GK} could use Figiel's result to prove that if a separable
Banach space is isometric to a subset of another Banach space $Y$, then it is
actually linearly isometric to a subspace of $Y$. Let us mention that this is
not true in the non separable case and that counterexamples are given in
\cite{GK}.

In section 2 we recall the necessary background on Lipschitz-free Banach
spaces. We also state the version of Theorem 3.1 of \cite{GK} that we shall use
in the sequel. In section 3 we prove the main result of the paper. More
precisely, we produce a compact subset $K_0$ of $C([0,1])$ such that any Banach
space containing an isometric copy of $K_0$ must contain a subspace which is
linearly isometric to $C([0,1])$. We also show how our technique can be
combined with the results of G.M.~L\"ovblom in \cite {L} on almost isometries
between $C(K)$-spaces.

Finally, let us say that $M$ is an {\it isometrically representing subset} of
the Banach space $X$ if any Banach space $Y$ containing an isometric copy of
$M$ contains a subset which is isometric to $X$. Notice that if $M$ is an
isometrically representing subset of a separable Banach space $X$, then it
follows from the result of Godefroy and Kalton that any Banach space containing
an isometric copy of $M$ has a subspace which is linearly isometric to $X$. In
the last section we produce compact isometrically representing subsets for the
finite dimensional polyhedral spaces and for $\ell_1$. We also show that the
unit ball of $c_0$ isometrically represents the whole space.

\section{Preliminary results}\label{Preliminary}

We begin this section with a localized version of Theorem 3.1 and Corollary 3.3
in \cite{GK}. We use the same notation as in \cite{GK} but recall them for the
seek of completeness.

Let $(E,d)$ be a metric space with a specified point that we denote as $0$. For
$Y$ a Banach space and $f: E\to Y$ we write
$$\|f\|_L = \sup\left\{\frac{\|f(y)-f(x)\|}{d(x,y)}\; ;\; x\neq y \text{ in } E\right\}$$
The space $Lip_0(E)$ is the space of all $f: E\to \mathbb R$ such that $f(0)=0$
and $\|f\|_L<\infty$ equipped with the norm $\|\cdot\|_L$. It turns to have a
canonical predual $\mathcal F(X)$ which is the closed linear span in the dual
of $Lip_0(E)$ of the evaluation functionals $\delta(x)$ defined by
$\delta(x)(f)=f(x)$, for all $f$ in $Lip_0(E)$ and $x$ in $E$. If $Y$ is a
Banach space and $g: E\to Y$ is a Lipschitz map, then there exists a unique
linear operator $\hat g: \mathcal F(E)\to Y$ such that $\hat g\circ \delta =
g$. Moreover $\| \hat g\| = \|g\|_L$. In particular, when $E$ is a Banach
space, applying this to the identity map on $E$, we see that $\delta$ admits a
norm-one linear left inverse $\beta$.

When $F$ is a subset of $E$ which contains $0$,  we denote by $\mathcal F_E(F)$
the closed linear span in $\mathcal F(E)$ of the evaluation functionals
$\delta(x)$, $x\in F$. Since, by inf-convolution, any real valued Lipschitz
function on $F$ can be extended to the whole space $E$ with the same Lipschitz
constant, it is clear that the spaces $\mathcal F_E(F)$ and $\mathcal F(F)$ are
canonically isometric.

\medskip In our first lemma, we rephrase Theorem 3.1 of \cite{GK} for our
particular purpose.

\begin{Lem}\label{GodKal}
Let $X$ be a separable Banach space. Let $F$ be a closed convex subset of $X$
such that $0\in F$. We assume that the closed linear span of $F$ is $X$. Then
there exists an isometric linear embedding $T: X\to \mathcal F_X(F)$ such that
$\beta\circ T$ is the identity map on $X$.
\end{Lem}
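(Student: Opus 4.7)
The plan is to mimic the proof of Theorem 3.1 of \cite{GK}, arranging that only evaluations at points of $F$ enter the construction. The geometric input is that $\mathrm{span}\,F=\bigcup_{n\ge 1}n(F-F)$, so that this union is dense in $X$: by convexity of $F$ and $0\in F$, any positive combination $\sum\mu_j f_j$ with $f_j\in F$ rewrites as $Mg$ with $g\in F$ and $M=\sum\mu_j$, and hence every element of $\mathrm{span}\,F$ takes the form $n(a-b)$ with $a,b\in F$ and $n\in\Ndb$.

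Given such a decomposition, the element $u=n\delta(a)-n\delta(b)\in \mathcal F_X(F)$ is an exact section of $\beta$ at $x=n(a-b)$ of the right norm: $\beta(u)=x$, and $\|u\|=\|x\|$, because the Lipschitz estimate gives $\|u\|\le n\|a-b\|=\|x\|$ while restricting the defining supremum to linear functionals $x^*\in X^*\subset Lip_0(X)$ yields $\|u\|\ge\sup_{\|x^*\|\le 1}|x^*(x)|=\|x\|$. In particular $\beta$ restricted to $\mathcal F_X(F)$ is already a norm-one linear surjection onto $X$. Following \cite{GK}, I would then fix a countable dense sequence in $X$, use the density above to produce, for each of its elements, approximate sections in $\mathcal F_X(F)$ whose barycenters converge to that element and whose norms converge to its norm, and take a weak-$*$ limit along a free ultrafilter in $\mathcal F_X(F)^{**}$. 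Arranging the approximants compatibly yields a linear map $T:X\to \mathcal F_X(F)^{**}$ of norm at most $1$ with $\beta^{**}\circ T=\mathrm{Id}_X$, which is then automatically isometric since $\|T(x)\|\ge\|\beta^{**}T(x)\|=\|x\|$.

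The main obstacle, and the technical heart of Theorem 3.1 of \cite{GK}, is to show that $T$ actually takes values in the norm-closed subspace $\mathcal F_X(F)$ and not merely in its bidual. This is carried out in \cite{GK} by exploiting the fine structure of Lipschitz-free spaces together with the separability of $X$, and transposes to the present situation since the approximate sections built above already live in $\mathcal F_X(F)$. Once this last point is in place, $T$ is the desired linear isometric embedding satisfying $\beta\circ T=\mathrm{Id}_X$.
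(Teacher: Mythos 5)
Your first paragraph is correct but not where the difficulty lies, and the rest of the argument has a genuine gap at its center. The statement ``arranging the approximants compatibly yields a linear map $T$'' is precisely the whole content of the lemma and no mechanism for it is offered: the exact norm-preserving preimages $n\delta(a)-n\delta(b)$ of the points of ${\rm span}\,F$ give a pointwise (nonlinear) selection of $\beta^{-1}$, and a weak-$*$ cluster point of such a selection along an ultrafilter has no reason to be additive. Linear norm-one sections of quotient maps do not exist merely because every point lifts with equal norm, so this step cannot be dismissed as routine. Moreover your description of how \cite{GK} handles it is not accurate: in the proof of Theorem 3.1 there, linearity is built in from the start, since $T$ is defined on the dense subspace spanned by a total sequence $(x_n)$ by $T(\sum a_nx_n)=\sum a_n\phi_n$ with $\phi_n=\int_{H_n}\bigl[\delta\bigl(x_n+\sum t_kx_k\bigr)-\delta\bigl(\sum t_kx_k\bigr)\bigr]\,d\lambda_n(t)$, and the technical heart is the norm estimate $\bigl\|\sum a_n\phi_n\bigr\|\le\bigl\|\sum a_nx_n\bigr\|$, obtained by averaging over the Hilbert cube; the construction lands directly in the free space, so ``showing $T$ takes values in $\mathcal F_X(F)$ rather than in its bidual'' is not the obstacle that has to be overcome.

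This mischaracterization matters because it hides the one genuinely new point of the localized lemma. The reason the Godefroy--Kalton construction can be made to take values in $\mathcal F_X(F)$ is \emph{not} that your elements $n\delta(a)-n\delta(b)$ lie in $\mathcal F_X(F)$ (those elements never enter their proof); it is that one can choose the total sequence $(x_n)$ inside $F$, suitably scaled (e.g.\ $x_k=2^{-k}f_k$ with $(f_k)$ a total sequence in $F$), so that the compact set $\{\sum_{k=1}^\infty t_kx_k\,;\,0\le t_k\le1\}$ is contained in $F$ --- here one uses that $F$ is closed, convex, contains $0$ and has dense linear span. With that choice every point at which $\delta$ is evaluated in the definition of $\phi_n$ belongs to $F$, hence $\phi_n\in\mathcal F_X(F)$, and the extension and norm estimate of Theorem 3.1 of \cite{GK} then apply verbatim to give $T:X\to\mathcal F_X(F)$ with $\beta\circ T={\rm Id}_X$. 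Since your write-up neither supplies this choice of $(x_n)$ nor any substitute for the averaging argument that produces linearity, the proof as it stands does not establish the lemma.
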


\begin{proof}
Since $X$ is separable, there exists a sequence $(x_n)_{n\ge 1}$ in $F$ which
is total in $X$ and such that the set $\{\sum_{k=1}^\infty t_kx_k\;;\; 0\le
t_k\le 1 \text{ for all }k\}$ is a compact subset of $F$. We introduce the
Hilbert cube 
$$H_n=\big\{(t_k)_{k=1}^\infty\;;\; 0\le t_k\le 1 \text{ for all }k
\text{ and }t_n=0\big\}$$
endowed with the product Lebesgue measure $\lambda_n$.
Following the proof of Theorem 3.1 in \cite{GK}, we define
$$\phi_n = \int_{H_n} \left[\delta\left(x_n+\sum t_k x_k\right) -
\delta\left(\sum t_kx_k\right)\right]d\lambda_n(t)$$ Our choice of $(x_n)$
ensures that $\phi_n\in \mathcal F_X(F)$. As proved in Theorem 3.1 in
\cite{GK}, the map $x_n\mapsto \phi_n$ extends to a norm-one linear operator
$T$ from $X$ to $\mathcal F_X(F)$ such that $\beta\circ T$ is the identity map
on $X$.
\end{proof}
{From this, we derive the main statement of this section.}

\begin{Thm}\label{godkalfig}
Let $X$ and $Z$ be Banach spaces. Assume that $X$ is separable. Let $F$ be a
closed convex subset of $X$, containing $0$ and such that the closed linear
span of $F$ is $X$. Let $j: F\to Z$ be an isometric embedding such that
$j(0)=0$. Assume also that there exists a linear operator $Q: Z\to X$
satisfying $\|Q\|\le 1$ and $(Q\circ j)(x)=x$ for any $x\in F$. Then $X$ is
linearly isometric to a subspace of $Z$.
\end{Thm}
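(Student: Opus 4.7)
The plan is to chain together three maps whose composition will be the desired linear isometric embedding of $X$ into $Z$: the operator $T:X\to\mathcal F_X(F)$ furnished by Lemma \ref{GodKal}, the canonical linearization $\hat j:\mathcal F(F)\to Z$ of the Lipschitz map $j$ (which we identify with an operator from $\mathcal F_X(F)$ to $Z$ via the canonical isometry between $\mathcal F(F)$ and $\mathcal F_X(F)$), and finally the hypothesis map $Q:Z\to X$ used only to verify the isometric property. The main map to consider is $J=\hat j\circ T:X\to Z$.

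First I would check that $J$ is a contraction. By Lemma \ref{GodKal}, $T$ is a linear isometry, and since $j$ is an isometric embedding it has Lipschitz constant $1$, so $\|\hat j\|=\|j\|_L=1$ and thus $\|J\|\le 1$.

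The heart of the argument is then the identity $Q\circ\hat j=\beta$ on $\mathcal F_X(F)$, where $\beta$ is the norm-one left inverse of $\delta$ on $X$ introduced in the preliminaries (restricted here to $\mathcal F_X(F)$). Both sides are bounded linear maps from $\mathcal F_X(F)$ to $X$, and on a generator $\delta(x)$ with $x\in F$ they satisfy
$$(Q\circ\hat j)(\delta(x)) = Q(j(x)) = x = \beta(\delta(x)),$$
using the hypothesis $Q\circ j=\mathrm{id}_F$. Since by construction $\mathcal F_X(F)$ is the closed linear span of $\{\delta(x)\,:\, x\in F\}$, continuity gives the equality everywhere. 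Consequently
$$Q\circ J = Q\circ \hat j\circ T = \beta\circ T = \mathrm{id}_X,$$
the last identity coming from Lemma \ref{GodKal}.

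Combining $\|Q\|\le 1$ with $Q\circ J=\mathrm{id}_X$ yields $\|x\|=\|Q(Jx)\|\le\|Jx\|$ for every $x\in X$, while the first step gave $\|Jx\|\le\|x\|$. Therefore $J$ is a linear isometric embedding of $X$ into $Z$. The only subtlety I anticipate is the bookkeeping with the canonical identification of $\mathcal F(F)$ and $\mathcal F_X(F)$ so that $\hat j$ is genuinely defined on the target of $T$; this is exactly what the inf-convolution extension argument in the preliminaries was put in place to handle, and no further obstacle is expected.
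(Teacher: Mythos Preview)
Your proof is correct and follows essentially the same route as the paper: you form $J=\hat j\circ T$, verify $Q\circ\hat j=\beta$ on the generators $\delta(x)$ of $\mathcal F_X(F)$ to obtain $Q\circ J=\mathrm{id}_X$, and then use $\|Q\|\le 1$ together with $\|J\|\le 1$ to conclude that $J$ is an isometric embedding. The only cosmetic difference is that you spell out the inequality $\|x\|\le\|Jx\|\le\|x\|$ explicitly, whereas the paper simply states that the contraction property of $Q$ forces $\hat j\circ T$ to be isometric.
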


\begin{proof}
Let $T : X\to \mathcal F_X(F)$ be the operator given by Lemma \ref{GodKal}. Let
$\hat \jmath : \mathcal F_X(F)\to Z$ be the linear operator defined by $\hat
\jmath\circ \delta = j$. For any $x\in F$, we have $Q\circ \hat \jmath\circ\delta(x) =
Q\circ j(x)=x=\beta\circ\delta(x)$. Hence, by linearity and continuity, $Q \circ\hat
\jmath(\mu)=\beta(\mu)$ for $\mu\in \mathcal F_X(F)$. Since
$T(X)\subset\mathcal F_X(F)$, we have $Q\circ \hat \jmath\circ T(x)=\beta\circ T(x)=x$ for
any $x\in F$ and thus, again by linearity and continuity, for any $x\in X$.
Finally, the fact that $Q$ is a contraction implies that $\hat \jmath\circ T :
X\to Z$ is a linear isometric embedding.
\end{proof}

\section{Isometric Embeddings of spaces of continuous functions}\label{C(K)spaces}

We begin this section with the main result of this paper.

\begin{Thm}\label{lip} Let $(R,d)$ be a compact metric space. Then  there is
a compact subset $K$ of $C(R)$ such that whenever $K$ embeds isometrically into
a Banach space $Y$, then $C(R)$ is linearly isometric to a subspace of $Y$.
\end{Thm}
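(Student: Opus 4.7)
The plan is to apply Theorem~\ref{godkalfig} with $X = C(R)$ and $F = K$. This reduces the problem to constructing a compact convex subset $K \subset C(R)$ containing $0$ and satisfying $\overline{\mathrm{span}}\,K = C(R)$, and then producing -- from any isometric embedding $j : K \to Y$ with $j(0) = 0$ (which we may always assume by translation) -- a contractive linear operator $Q : \overline{\mathrm{span}}\,j(K) \to C(R)$ satisfying $Q \circ j = \mathrm{id}_K$.

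For the construction of $K$, fix a sequence $(f_n)_{n \ge 1}$ dense in the unit ball of $C(R)$, set $x_n := 2^{-n} f_n$, and define
\[
K := \bigg\{\sum_{n=1}^\infty t_n x_n \,:\, 0 \le t_n \le 1\bigg\}.
\]
Since $\|x_n\|_\infty \le 2^{-n}$, the series converges absolutely; thus $K$ is the continuous image of the Hilbert cube $[0,1]^{\mathbb{N}}$, hence compact. It is convex, contains $0$, and has dense linear span in $C(R)$, as required.

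To construct $Q$, I plan to imitate the averaging from Lemma~\ref{GodKal}, applied to $j$ rather than $\delta$: for each $n$ form the Bochner integral
\[
\phi_n := \int_{H_n}\!\bigl[j\bigl(x_n + {\textstyle\sum_k} t_k x_k\bigr) - j\bigl({\textstyle\sum_k} t_k x_k\bigr)\bigr]\, d\lambda_n(t) \in Y,
\]
which by the isometric hypothesis satisfies $\|\phi_n\|_Y \le \|x_n\|$. The assignment $\phi_n \mapsto x_n$ then defines a linear contraction $Q_0$ on $\overline{\mathrm{span}}\{\phi_n\}$. Extending $Q_0$ to a contraction $Q$ on all of $\overline{\mathrm{span}}\,j(K)$ by Hahn--Banach, and verifying $Q \circ j = \mathrm{id}_K$ on the whole of $K$ (not merely on the generators $x_n$), completes the argument via Theorem~\ref{godkalfig}.

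The main obstacle is producing this contractive extension $Q$ and establishing $Q \circ j = \mathrm{id}_K$ on all of $K$. This amounts to a \emph{global} estimate of the form $\|\sum_i a_i k_i\|_{C(R)} \le \|\sum_i a_i j(k_i)\|_Y$ for arbitrary finite real linear combinations of elements of $K$. Such an inequality does not follow from the pairwise isometric condition alone; it must be derived from the joint isometric behavior of $j$ across the full Hilbert-cube structure of $K$, and this is where the specific structure of both $K$ and $C(R)$ must be exploited.
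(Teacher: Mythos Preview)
Your proposal has a genuine gap precisely where you flag the ``main obstacle''. The averaging construction from Lemma~\ref{GodKal} produces an operator going \emph{into} the target space, not out of it: in the setting of Lemma~\ref{GodKal} one has $T\colon X\to\mathcal F_X(F)$ with $T(x_n)=\phi_n$ and $\|T\|\le 1$, and the reverse inequality $\|\sum a_n x_n\|\le\|\sum a_n\phi_n\|$ comes only from the \emph{pre-existing} linear left inverse $\beta$ of $\delta$. In your situation $j$ lands in an arbitrary $Y$, and $\beta$ has no analogue --- indeed, producing such an analogue $Q$ is exactly the task. Concretely, $\hat\jmath\colon\mathcal F(K)\to Y$ is a contraction sending the $\mathcal F(K)$-version of $\phi_n$ to your $\phi_n\in Y$, so the averaging only yields $\|\sum a_n\phi_n\|_Y\le\|\sum a_n x_n\|_{C(R)}$, the opposite of what you need. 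Moreover, the Hahn--Banach step is not available: contractive extension of operators into $C(R)$ would require $C(R)$ to be $1$-injective, which fails for general compact metric $R$.

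The paper takes a completely different route that exploits the specific structure of $C(R)$. One chooses $K=\{f\in C(R):\|f\|_\infty\le 1,\ \|f\|_L\le 1\}$ and uses the Lipschitz peak functions $\varphi_t(s)=1-d(s,t)$. For each $t$ one selects, \`a la Figiel, a functional $y_t^*\in S_{Y^*}$ norming the segment $j(\varphi_t)-j(-\varphi_t)$, and then proves by a direct metric argument that $(y_t^*\circ j)(\varphi)=\varphi(t)$ for every $\varphi\in K/5$. The operator $Q(y)=(y_t^*(y))_{t\in R}$ is then automatically a contraction into $C(R)$ with $Q\circ j=\mathrm{id}$ on $K/5$, and Theorem~\ref{godkalfig} applies. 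The point is that $Q$ is built from scalar functionals (where Hahn--Banach is free) and assembled via the evaluation structure of $C(R)$; your Hilbert-cube $K$ carries no such structure, which is why the construction of $Q$ stalls.
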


\begin{proof} We may assume that the diameter of $(R,d)$ is less than or equal to 1. Then
we consider $K=\{f\in C(R), \ \|f\|_\infty\le 1 \ {\rm and}\ \|f\|_L\le 1\}$.
Let $Y$ be a Banach space and assume that $j: K\to Y$ is an isometry such that
$j(0)=0$. We denote $F=K/5$ and $Z$ the closed linear span in $Y$ of $j(F)$. In
view of Theorem \ref{godkalfig}, it is enough to build a continuous linear map
$Q: Z\to C(R)$ such that $\|Q\|\le 1$ and for any $\varphi \in F$, $(Q\circ
j)(\varphi)=\varphi$. Our construction is adapted from a work of T. Figiel
\cite{F} that was already used in \cite{GK}.

\noindent For $s,t \in R$, we define $\varphi_t(s)=1-d(s,t)$. For $t$ in $R$,
the functions $\varphi_t$ and $-\varphi_t$ clearly belong to $K$ and
$\|j(\varphi_t)-j(-\varphi_t)\| =2$. Thus we can pick $y^*_t\in Y^*$ such that
$\|y^*_t\|=1$ and $y^*_t(j(\varphi_t)-j(-\varphi_t))=2$. Since $j$ is an
isometry and $j(0)=0$, we clearly have:
\begin{equation}\label{normingfunctional}
\forall t\in R\ \ \forall \lambda\in[-1,1]\ \ (y^*_t \circ j)(\lambda
\varphi_t)=\lambda.
\end{equation}

To conclude our proof, it will be enough to show that
\begin{equation}\label{keypoint}
\forall t\in R\ \ \forall \varphi \in F\ \ (y^*_t \circ j)(\varphi)=\varphi(t).
\end{equation}
Indeed, we could then define for $y \in Z$, $Q(y)=(y^*_t(y))_{t\in R}$. It
clearly follows from (\ref{keypoint}) that $Q$ is a continuous linear map from
$Z$ to $C(R)$ such that $\|Q\|\le 1$ and $(Q\circ j)(\varphi)=\varphi$ for all
$\varphi \in F$.

\noindent So let us assume that there exist $t\in R$ and $\varphi \in F$ such
that $(y^*_t \circ j)(\varphi)\neq \varphi(t)$. We set $\psi=\varphi-(y^*_t
\circ j)(\varphi)\varphi_t$. Since $\|\psi\|_\infty < 1/2$ and $\psi(t)\neq
0$, there exists $u\in \{-2,2\}$ so that
$$0< (\varphi_t-u\psi)(t)<1.$$
Besides, $\|\psi\|_L< 1/2$ and the diameter of $R$ is less than or equal to 1, so for any
$s\in R$:
$$-1<1-d(s,t)-u\psi(s)=(\varphi_t-u\psi)(s)\le(\varphi_t-u\psi)(t)<1.$$
Hence
\begin{equation}\label{ineq1}
\|\psi-\frac{\varphi_t}{u}\|_\infty=\|\varphi-\big( (y^*_t\circ
j)(\varphi)+\frac{1}{u}\big)\varphi_t\|_\infty<\frac{1}{2}\cdot
\end{equation}
Note that $\lambda=(y^*_t\circ j)(\varphi)+\frac{1}{u} \in [-1,1]$. So
(\ref{normingfunctional}) yields:
$$\frac{1}{2}=|(y^*_t\circ
j)(\varphi)-\lambda|=|(y^*_t\circ j)(\varphi)-(y^*_t\circ
j)(\lambda\varphi_t)|\le \|\varphi-\lambda \varphi_t\|_\infty.$$ This is in
contradiction with inequality  (\ref{ineq1}).

\end{proof}
{From the universality of $C([0,1])$, we immediately deduce the following.}

\begin{Cor} Consider the following compact subset of $C([0,1])$:
$$K_0=\{f\in C([0,1]), \ \|f\|_\infty\le 1 \ {\rm and}\ \|f\|_L\le 1\}.$$ If a Banach
space $Y$ contains an isometric copy of $K_0$, then it contains an isometric
copy of any separable metric space and any separable Banach space is linearly
isometric to a subspace of $Y$.
\end{Cor}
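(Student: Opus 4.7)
The plan is to deduce this corollary as a direct consequence of Theorem \ref{lip} combined with the classical universality of $C([0,1])$. I would not reprove anything: the work has already been done.

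First I would observe that $[0,1]$ with its usual metric is a compact metric space of diameter $1$, so Theorem \ref{lip} applies verbatim with $R=[0,1]$. Inspecting the construction in the proof of that theorem, the distinguished compact set $K\subset C(R)$ is defined to be $\{f\in C(R):\|f\|_\infty\le 1\ \text{and}\ \|f\|_L\le 1\}$, which in the case $R=[0,1]$ is exactly the set $K_0$ of the corollary. Therefore, the hypothesis that $Y$ contains an isometric copy of $K_0$ gives, by Theorem \ref{lip}, a linear isometric embedding of $C([0,1])$ into $Y$.

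Second, I would invoke the two universality properties of $C([0,1])$ recalled in the introduction. By the Banach--Mazur theorem \cite{B}, every separable Banach space is linearly isometric to a subspace of $C([0,1])$; composing with the linear isometric embedding $C([0,1])\hookrightarrow Y$ produced in the previous step yields the desired linear isometric embedding into $Y$. Analogously, every separable metric space is isometric to a subset of $C([0,1])$ (as a bounded subset, e.g. via the Fr\'echet--Kuratowski embedding, or as a consequence of Banach--Mazur applied to a separable Banach space containing it); the same composition then delivers an isometric copy of any separable metric space inside $Y$.

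There is no real obstacle here: the whole content lives in Theorem \ref{lip}, and the corollary is just the transfer of its conclusion through two classical universality statements. The only tiny verification is the identification of the $K$ of Theorem \ref{lip} with the $K_0$ of the corollary, which is immediate from the definitions once one has normalized the diameter of $[0,1]$ to be at most $1$.
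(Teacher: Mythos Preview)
Your proposal is correct and follows exactly the paper's intended approach: apply Theorem \ref{lip} with $R=[0,1]$ to get a linear isometric copy of $C([0,1])$ in $Y$, then invoke the Banach--Mazur universality of $C([0,1])$. The paper itself merely says the corollary follows immediately from the universality of $C([0,1])$, so your write-up is just a spelled-out version of that one-line deduction.
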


It is now natural to ask if a metric space that is isometrically universal for
all metric compact spaces is isometrically universal for all separable metric
spaces. The next proposition shows that, for elementary reasons, this is not
the case.

\begin{Prop} There exists a separable metric space $V$ such that every
separable and bounded metric space is isometric to a subset of $V$ but so that
$\Rdb$ cannot be isometrically embedded into $V$.
\end{Prop}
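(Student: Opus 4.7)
The plan is to build $V$ as a disjoint union of ``layers'' $U_n$, one per admissible diameter, in such a way that each bounded separable metric space fits inside a single layer while the inter-layer distances are large enough (and structured enough) to forbid an isometric copy of $\mathbb{R}$ from spreading across distinct layers.

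For each integer $n \geq 1$, I would fix a separable metric space $(U_n, d_n)$ of diameter at most $n$ that is isometrically universal for separable metric spaces of diameter at most $n$; a standard choice is the Urysohn space of diameter $n$ (obtained from the Urysohn space of diameter $1$ by scaling). On $V = \bigsqcup_{n \geq 1} U_n$ I would put the metric
\[
d\bigl((n,x),(n,y)\bigr) = d_n(x,y), \qquad d\bigl((n,x),(m,y)\bigr) = n + m \;\text{ if } n \neq m.
\]
A short case analysis, using only the bound $\mathrm{diam}(U_n) \leq n$, shows that $d$ satisfies the triangle inequality. Clearly $V$ is separable as a countable union of separable sets, and every bounded separable metric space $M$ of diameter $D$ embeds isometrically into $U_{\lceil D \rceil}$, hence into $V$; this takes care of the universality half of the statement.

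To rule out an isometric embedding $\iota \colon \mathbb{R} \to V$, let $n_0$ be the index of the layer containing $\iota(0)$. For any $r \in \mathbb{R}$, either $\iota(r) \in U_{n_0}$, in which case $|r| = d(\iota(0), \iota(r)) \leq \mathrm{diam}(U_{n_0}) \leq n_0$, or $\iota(r) \in U_m$ for some $m \neq n_0$, in which case $|r| = n_0 + m$ is a positive integer. Taking any real $r$ with $r > n_0$ and $r - n_0 \notin \mathbb{N}$ produces an immediate contradiction, finishing the proof.

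The main design decision, which I expect to be the sole delicate point, is the choice of the inter-layer distance. The formula $n + m$ is calibrated to do two jobs at once: it dominates the intrinsic diameters $\mathrm{diam}(U_n) \leq n$, which is what makes the triangle inequality hold; and it forces the set of distances $\{d(\iota(0), \iota(r)) : \iota(r) \notin U_{n_0}\}$ to sit inside $n_0 + \mathbb{N}$, a discrete subset of $\mathbb{R}$, which is precisely what a continuous isometric copy of $\mathbb{R}$ cannot realize.
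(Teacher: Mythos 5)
Your proposal is correct and follows essentially the same construction as the paper: a disjoint union of bounded universal layers separated by large inter-layer distances (the paper uses the balls $nB_{C([0,1])}$ with cross-layer distance $\|f\|_\infty+1+\|g\|_\infty$, you use scaled Urysohn spaces with cross-layer distance $n+m$). The only real difference is the final obstruction to embedding $\mathbb{R}$: the paper observes that every connected component of $V$ is bounded, while you note that the distances from a fixed point to other layers form a discrete set; both finishes are valid and equally elementary.
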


\begin{proof} Let $B$ denote the unit ball of $C([0,1])$. Notice first that
$rB$ contains an isometric copy of all separable metric spaces with diameter
less than $r$. Let $V$ be the disjoint union of the sets $V_n=nB$, for $n\in
\Ndb$. We now define a metric $d$ on $V$ as follows. On $V_n$, $d$ is the
natural distance in $C([0,1])$. For $n\neq m$, $f\in V_n$ and $g\in V_m$, we
set $d(f,g)= \|f\|_\infty +1+\|g\|_\infty$. It is clear that $(V,d)$ is a
separable metric space which is universal for all separable bounded metric
spaces. On the other hand, any connected component of $V$ is bounded. Therefore
$\Rdb$ does not embed isometrically into $V$.
\end{proof}

We shall now combine Theorem \ref{lip} with a result of G.M. L\"{o}vblom \cite{L}
on almost isometries between $C(K)$ spaces to obtain

\begin{Cor}
Let $R$ and $S$ be compact metric spaces. Assume there exists a Lipschitz
embedding $F$ of the unit ball of $C(R)$ into $C(S)$ such that
$$\forall f,g \in B_{C(R)}\ \ \frac{15}{16}\,\|f-g\|\le \|F(f)-F(g)\| \le
\|f-g\|.$$ Then $C(R)$ is linearly isometric to a subspace of $C(S)$.
\end{Cor}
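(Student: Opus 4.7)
The plan is to combine the hypothesis with L\"ovblom's theorem in order to produce an \emph{exact} isometric copy of the compact set
$$K=\{f\in C(R):\|f\|_\infty\le 1,\ \|f\|_L\le 1\}$$
inside $C(S)$, and then to appeal to Theorem \ref{lip}.

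The first step is to apply the main result of \cite{L}. The role of the explicit constant $15/16$ in the hypothesis is to put us in the range where L\"ovblom's theorem applies: starting from an almost isometric embedding $F:B_{C(R)}\to C(S)$ with distortion at most $16/15$, his result provides an actual isometric embedding $\widetilde F$ of a compact subset of $B_{C(R)}$ into $C(S)$, and the relevant compact subset is precisely the set $K$ of $1$-Lipschitz functions in $B_{C(R)}$ (possibly up to a harmless rescaling and a translation to put $\widetilde F(0)=0$). This is the content one expects from \cite{L}, since $K$ is exactly the natural compact ``Lipschitz ball'' associated to $R$, and the dependence of the distortion constant on the target $C(S)$ is what fixes the numerical value $15/16$.

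Once $\widetilde F:K\to C(S)$ is produced, one simply invokes Theorem \ref{lip} with $Y=C(S)$: the hypothesis of that theorem is satisfied, so $C(R)$ is linearly isometric to a subspace of $C(S)$, which is the desired conclusion.

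The main obstacle here is bookkeeping rather than conceptual: one has to check that the quantitative statement in \cite{L} is indeed optimal at $16/15$ for producing an isometric copy of $K$ (and not merely of some smaller, less useful compact subset), and that after the necessary recentering the map lands in the setting of Theorem \ref{lip}. Apart from these constant-tracking matters, the corollary is essentially a direct combination of \cite{L} with Theorem \ref{lip}.
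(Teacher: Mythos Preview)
Your overall strategy is correct and matches the paper's: combine L\"ovblom's result with Theorem \ref{lip}. However, you have misidentified what L\"ovblom's theorem actually provides. Theorem 2.1 of \cite{L}, applied with $\varepsilon=\frac{1}{16}$, does not single out the Lipschitz ball $K$; rather, it upgrades the almost isometry $F$ (after translating so that $F(0)=0$) to an exact isometry $j:\frac{1}{2}B_{C(R)}\to B_{C(S)}$ defined on the \emph{entire half ball} of $C(R)$, which is not compact when $R$ is infinite. The set $K$ plays no role in L\"ovblom's statement, and the constant $\frac{15}{16}$ has nothing to do with $K$ being a Lipschitz ball: it is simply the threshold $1-\varepsilon$ with $\varepsilon=\frac{1}{16}$ at which \cite{L} guarantees an exact isometry on the half ball.

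What one then uses is the trivial inclusion $\frac{1}{2}K\subset\frac{1}{2}B_{C(R)}$, so that $j$ restricts to an isometric embedding of $\frac{1}{2}K$ into $C(S)$. At this point one invokes the \emph{proof} of Theorem \ref{lip} rather than its literal statement (since it is $\frac{1}{2}K$, not $K$, that embeds); as you correctly anticipated, this rescaling is harmless and the argument goes through unchanged. So the skeleton of your argument is right, but the speculation about what \cite{L} delivers should be replaced by the precise content just described.
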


\begin{proof}
We may assume that $F(0)=0$. Using Theorem 2.1 in \cite{L} for the particular
value of $\eps=\frac{1}{16}$ , we obtain an isometry $j :
\frac{1}{2}B_{C(R)}\to B_{C(S)}$. In particular, the set of functions on $R$
such that both the supremum and the Lipschitz norms are less than or equal to
$\frac{1}{2}$ isometrically embeds into $C(S)$. Then it follows from our
previous proof that $C(R)$ embeds linearly isometrically into $C(S)$.
\end{proof}

\section{Isometrically representing subsets}

In this section, we address the following problem: given a separable Banach
space $X$, we look for a small subset $K$ of $X$ such that whenever $K$
isometrically embeds into a Banach space $Y$, then $X$ embeds linearly
isometrically into $Y$. We remind the reader that we call such a set $K$ an
{\it isometrically representing subset} of $X$. We shall restrict ourselves to
considering $K$ to be a compact subset of $X$ or the unit ball of $X$.

\medskip We start with a finite dimensional result.

\begin{Thm}\label{polytope} Let $X$ be a finite dimensional polyhedral
Banach space. Then the unit ball of $X$ is an isometrically representing subset
of $X$.
\end{Thm}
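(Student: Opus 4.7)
I plan to parallel the proof of Theorem~\ref{lip}, with the extreme functionals on $B_{X^*}$ and certain distinguished unit vectors of $X$ playing the roles that point evaluations on $R$ and tent functions played there. Since $X$ is polyhedral, $B_{X^*}$ is a finite polytope; I will write its extreme points as $\pm x_1^*,\ldots,\pm x_n^*$ with $\|x_i^*\|=1$, so $\|x\|_X=\max_{1\le i\le n}|x_i^*(x)|$ for every $x\in X$. For each $i$ I would pick $\varphi_i$ in the relative interior of the facet $\{x\in B_X : x_i^*(x)=1\}$: then $\|\varphi_i\|=x_i^*(\varphi_i)=1$, $\|\varphi_i-(-\varphi_i)\|=2$, and a compactness argument furnishes a uniform $\eta>0$ with $|x_k^*(\varphi_i)|\le 1-\eta$ whenever $k\neq i$. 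Fix $\alpha>\max(4,4/\eta)$, set $F=B_X/\alpha$, assume $j(0)=0$ (by translating $j$), and let $Z$ be the closed linear span of $j(F)$ in $Y$. The aim is to produce, via Theorem~\ref{godkalfig}, a contraction $Q:Z\to X$ with $Q\circ j=\mathrm{Id}_F$.

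For each $i$, using $\|j(\varphi_i)-j(-\varphi_i)\|=2$ together with Hahn--Banach, I would pick $y_i^*\in Y^*$ with $\|y_i^*\|=1$ and $y_i^*(j(\varphi_i)-j(-\varphi_i))=2$; the same sandwich as in Theorem~\ref{lip} forces $(y_i^*\circ j)(\lambda\varphi_i)=\lambda$ for every $\lambda\in[-1,1]$. The crucial step is to prove $(y_i^*\circ j)(\varphi)=x_i^*(\varphi)$ for all $\varphi\in F$ and all $i$. Following the contradiction argument of Theorem~\ref{lip}, if this fails one sets $c=(y_i^*\circ j)(\varphi)$, $\psi=\varphi-c\varphi_i$, so $x_i^*(\psi)\neq 0$ and $\|\psi\|\le 2/\alpha$, and picks $u\in\{-2,2\}$ with $u\,x_i^*(\psi)>0$. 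The key geometric estimate will be $\|\varphi_i-u\psi\|_X=\max_k|x_k^*(\varphi_i-u\psi)|<1$, verified coordinate by coordinate: for $k=i$ we get $x_i^*(\varphi_i-u\psi)=1-u\,x_i^*(\psi)\in(0,1)$; for $k\neq i$ we get $|x_k^*(\varphi_i-u\psi)|\le(1-\eta)+4/\alpha<1$. This rearranges to $\|\varphi-(c+\tfrac{1}{u})\varphi_i\|_X<\tfrac12$.

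Setting $\lambda=c+\tfrac{1}{u}$, one has $|\lambda|\le 1/\alpha+1/2<1$, so the relation $(y_i^*\circ j)(\lambda\varphi_i)=\lambda$ yields $\tfrac12=|c-\lambda|\le\|\varphi-\lambda\varphi_i\|<\tfrac12$, the required contradiction. With $y_i^*\circ j=x_i^*|_F$ in hand, I would identify $X$ isometrically with its image in $\ell_\infty^n$ under $x\mapsto(x_i^*(x))_i$: then $\widetilde Q:Z\to\ell_\infty^n$, $y\mapsto(y_i^*(y))_i$, is linear of norm at most one and sends $j(F)$ into this copy of $X$, hence by continuity sends all of $Z$ into $X$ (a closed finite-dimensional subspace). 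This delivers the sought contraction $Q:Z\to X$ with $Q\circ j=\mathrm{Id}_F$, and Theorem~\ref{godkalfig} then gives the linear isometric embedding $X\hookrightarrow Z\subseteq Y$. The main obstacle I foresee is the geometric estimate $\|\varphi_i-u\psi\|_X<1$: polyhedrality enters essentially here, since it supplies the uniform facet slack $\eta$ that makes the scaling by $\alpha$ work, and it is precisely this slack that would be missing in a general finite-dimensional space.
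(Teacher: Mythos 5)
Your proposal is correct and follows essentially the same route as the paper: distinguished points in the relative interiors of the facets play the role of the tent functions $\varphi_t$, the same sandwich argument produces the norming functionals $y_i^*$, the same contradiction argument (with your explicit facet-slack $\eta$ replacing the paper's radius $r$ in its local-linearity statement for the norm) yields $y_i^*\circ j=x_i^*$ on a small ball, and the resulting coordinate map gives the contraction $Q$ needed for Theorem~\ref{godkalfig}. The only differences are cosmetic (explicit constants, and realizing $Q$ via $\ell_\infty^n$ rather than via a dual basis).
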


\begin{proof} Let $j:B_X \to Y$ be an isometric embedding such that $j(0)=0$.

\noindent Let $x^*_1,..,x^*_k \in S_{X^*}$ so that $B_X =\cap_{i=1}^k \{x\in
X,\ |x^*_i(x)|\le 1\}$. After removing some of the $x^*_i$'s if necessary, we
may and do assume that
$$\forall i \in \{1,..,k\}\ \ \exists x_i\in S_X,\ \ x^*_i(x_i)=1\ {\rm and}\
\forall l\neq i\ |x^*_l(x_i)|<1.$$ Then
\begin{equation}\label{goodball}
\exists r\in (0,\frac{1}{2}]\ \forall x\in X\ \forall i\in\{1,..,k\} \ \
\|x-x_i\|\le 4r \Rightarrow \|x\|=x_i^*(x).
\end{equation}
We now imitate the proof of Theorem \ref{lip} with the $x_i$'s playing the role
of the functions $\varphi_t$. So for $1\le i\le k$, we can pick $y_i^*\in
S_{Y^*}$ so that for any $\lambda\in[-1,1]$, $(y^*_i\circ j)(\lambda
x_i)=\lambda$. Assume now that $x\in rB_X$ and $(y^*_i\circ j)(x)\neq
x^*_i(x)$. Then consider $w=x-(y^*_i\circ j)(x)x_i$. Since $\|2w\|\le 4r$ and
$x^*_i(w)\neq 0$, it follows from (\ref{goodball}) that there is $u\in\{-2,2\}$
such that $x_i^*(x_i-uw)=\|x_i-uw\|<1$. Following the lines of our previous
proof, we then get a contradiction. Thus we have
\begin{equation}\label{keypoint2}
\forall x\in rB_X\ \ \forall i\in\{1,..,k\}\ \ (y^*_i\circ j)(x)=x^*_i(x).
\end{equation}
It is clear that $\{x^*_i\}_{i=1}^k$ spans $X^*$. So, if $n$ is the dimension
of $X$, we can pick $i_1<..<i_n$ such that $(x_{i_1}^*,..,x_{i_n}^*)$ is a
basis of $X^*$. Denote $\{z_1,..,z_n\}$ the basis of $X$ whose dual basis is
$(x_{i_1}^*,..,x_{i_n}^*)$. Now define
$$\forall y\in Y\ \ Q(y)=\sum_{m=1}^n y^*_{i_m}(y)z_m.$$
This map is linear and continuous from $Y$ to $X$. It follows from
(\ref{keypoint2}) that $(Q \circ j)(x)=x$ for all $x\in rB_X$. Let now $Z$ be
the closed linear span of $j(rB_X)$. We also get from (\ref{keypoint2}) that
$$\forall y \in Z\ \ \forall i\in\{1,..,k\}\ \ (x^*_i\circ Q)(y)=y^*_i(y).$$
So, if $y\in Z$, $\|Q(y)\|=\sup_i|x^*_i(Qy)|=\sup_i|y^*_i(y)|\le \|y\|$. Hence,
we are in situation to apply Theorem \ref{godkalfig} and conclude that $X$ is
linearly isometric to a subspace of $Z$.

\end{proof}

As a consequence, we obtain a similar result for $c_0$.

\begin{Cor} The Banach space $c_0$ is isometrically represented by its unit ball.
\end{Cor}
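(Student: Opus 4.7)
The plan is to adapt the proof of Theorem \ref{polytope} to the infinite-dimensional polyhedral space $c_0$, using the countable family of coordinate functionals $(e_i^*)_{i\in\Ndb}$ in place of the finite family $(x_i^*)_{i=1}^k$. The key structural observation is that each standard basis vector $e_i$ is a polyhedral vertex of $B_{c_0}$ in exactly the sense needed by (\ref{goodball}): for any $r<1/8$ and any $x \in c_0$, the inequality $\|x-e_i\|_\infty \le 4r$ forces $x_i \ge 1-4r > 4r \ge |x_j|$ for every $j \ne i$, hence $\|x\|_\infty = x_i = e_i^*(x)$. This is the only place where polyhedrality enters in the previous proof, so once it is available the rest transfers.

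Given an isometric embedding $j : B_{c_0} \to Y$ with $j(0)=0$, I would first use Hahn-Banach, as in the two preceding proofs, to select functionals $y_i^* \in S_{Y^*}$ with $y_i^*(j(e_i)-j(-e_i))=2$, which forces $(y_i^* \circ j)(\lambda e_i)=\lambda$ for every $\lambda\in[-1,1]$. Fixing $r=1/16$ and rerunning the perturbation argument of Theorem \ref{polytope} verbatim, with $e_i$ in the role of $x_i$ and the observation above replacing (\ref{goodball}), I would obtain
$$\forall i\in\Ndb,\ \forall x \in rB_{c_0},\ \ (y_i^* \circ j)(x) = e_i^*(x) = x_i.$$

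Let $Z$ be the closed linear span of $j(rB_{c_0})$ in $Y$ and define $Q : Z \to \ell_\infty$ by $Q(y)=(y_i^*(y))_{i\in\Ndb}$. The map $Q$ is clearly linear, and contractive since $\|y_i^*\| \le 1$. The step that is genuinely new relative to the finite-dimensional case, and the one I expect to be the main obstacle, is verifying that $Q$ takes its values in $c_0$ rather than only in $\ell_\infty$. To handle it, I would observe that on $j(rB_{c_0})$ one has $Q(j(x))=x \in c_0$, so $Q$ maps the dense subspace $\mathrm{span}(j(rB_{c_0}))$ of $Z$ into $c_0$; since $Q$ is continuous and $c_0$ is closed in $\ell_\infty$, this extends to the conclusion $Q(Z) \subset c_0$.

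Once this is established, the hypotheses of Theorem \ref{godkalfig} are in place with $X=c_0$ and $F=rB_{c_0}$ (closed, convex, containing $0$, with closed linear span equal to $c_0$, and $Q \circ j = \mathrm{Id}$ on $F$ by construction), and we conclude that $c_0$ is linearly isometric to a subspace of $Z \subset Y$. Apart from the $c_0$-valuedness check for $Q$, the argument is a direct transcription of the polyhedral proof, so no other new ingredient is needed.
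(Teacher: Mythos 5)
Your proof is correct and follows essentially the same route as the paper: the same vertex property of the basis vectors $e_i$, the same norming functionals $y_i^*$ and perturbation argument borrowed from Theorem \ref{polytope}, and the same final appeal to Theorem \ref{godkalfig} with $F=rB_{c_0}$. The only difference is organizational: the paper runs the finite-dimensional argument on the sections $X_n=\mathrm{span}(e_1,\dots,e_n)$ and patches the resulting maps $Q_n$ together, whereas you define $Q$ directly into $\ell_\infty$ on all of $Z$ and check $Q(Z)\subset c_0$ by density of $\mathrm{span}(j(rB_{c_0}))$ and closedness of $c_0$ --- which is exactly the content of the paper's ``this clearly yields'' step.
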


\begin{proof} Let $(e_k)_{k\ge 1}$ be the canonical basis of $c_0$, $(e^*_k)_{k\ge 1}$
the associated linear functionals and $X_n$ the linear span of
$\{e_1,..,e_n\}$. Assume that $j:B_{c_0} \to Y$ is an isometric embedding. Then
let $Z_n$ be the closed linear span of $j(\frac{1}{8} B_{X_n})$ and $Z$ be the
closed linear span of $j(\frac{1}{8} B_{c_0})$. Notice that for any $x\in c_0$
satisfying $\|x-e_k\|\le \frac{1}{2}$, we have that $\|x\|=e^*_k(x)$. Then it
follows from our previous argument that there exists a continuous linear map
$Q_n:Z_n\to c_0$ such that $\|Q_n\|\le 1$ and $(Q_n\circ j)(x)=x$ for all $x\in
\frac{1}{8}B_{X_n}$. This clearly yields the existence of $Q:Z\to c_0$ so that
$\|Q\|\le 1$ and $(Q\circ j)(x)=x$ for any $x$ in $\frac{1}{8}B_{c_0}$. The
conclusion follows again from Theorem \ref{godkalfig}.
\end{proof}

In the case of $\ell_1$, we need to use a completely different method to obtain
the following.

\begin{Prop} The Banach space $\ell_1$ admits a compact isometrically representing subset.
\end{Prop}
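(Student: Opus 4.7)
The plan is to take as compact representing subset a ``Cantor-type'' subset of $\ell_1$ built from sign sequences. Fix a positive summable sequence $(a_i)$ with $\sum a_i=1$ (say $a_i=2^{-i}$) and set
$$K=\Big\{\sum_{i=1}^\infty\epsilon_i\,a_ie_i\ :\ (\epsilon_i)\in\{-1,0,1\}^{\Ndb}\Big\}\subset\ell_1.$$
Since the map $(\epsilon_i)\mapsto\sum\epsilon_ia_ie_i$ is continuous from the compact product space $\{-1,0,1\}^{\Ndb}$ into $\ell_1$ (by dominated convergence, using $\sum a_i<\infty$), $K$ is compact. It contains $\{0\}\cup\{\pm a_ie_i\}_{i\ge 1}$ as well as all of the ``sign vectors'' $p_\epsilon=\sum_i\epsilon_i a_ie_i$ for $\epsilon\in\{-1,1\}^{\Ndb}$.

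Given an isometric embedding $j:K\to Y$ with $j(0)=0$, I would define on the dense subspace of finitely supported vectors of $\ell_1$ the linear map
$$\hat\jmath\Big(\sum c_ie_i\Big)=\sum\frac{c_i}{a_i}\,j(a_ie_i).$$
Since $\|j(a_ie_i)\|=a_i$, the triangle inequality yields $\|\hat\jmath\|\le 1$, so $\hat\jmath$ extends to a linear contraction $\ell_1\to Y$. The aim is to show that $\hat\jmath$ is in fact an isometric embedding. For this, for each sign sequence $\epsilon\in\{-1,1\}^{\Ndb}$ I would produce a norming functional $\phi_\epsilon\in S_{Y^*}$ satisfying $\phi_\epsilon(j(a_ie_i))=\epsilon_i a_i$ for every $i$. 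The existence of $\phi_\epsilon$ starts from the observation that $\|p_\epsilon-p_{-\epsilon}\|_{\ell_1}=2$, so Hahn--Banach yields $\phi_\epsilon\in S_{Y^*}$ with $\phi_\epsilon(j(p_\epsilon))=1$ and $\phi_\epsilon(j(-p_\epsilon))=-1$.

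The main step, and the main obstacle, is the key identity $\phi_\epsilon(j(a_ie_i))=\epsilon_ia_i$; this plays the role analogous to the norming computations for $\varphi_t$ in Theorem \ref{lip} and for the polytope vertices in Theorem \ref{polytope}. I would derive it from two extremality inputs: first, $\|p_\epsilon-\epsilon_ia_ie_i\|_{\ell_1}=1-a_i$ together with $\|\epsilon_ia_ie_i\|_{\ell_1}=a_i$ pin down $\phi_\epsilon(j(\epsilon_ia_ie_i))=a_i$; second, the symmetric computation using $-p_\epsilon$ and $-\epsilon_ia_ie_i$ gives $\phi_\epsilon(j(-\epsilon_ia_ie_i))=-a_i$. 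Combining these cases yields $\phi_\epsilon(j(a_ie_i))=\epsilon_i a_i$ regardless of the sign of $\epsilon_i$. Once this is in hand, for any finitely supported $x=\sum c_ie_i$ I would pick $\epsilon_i=\mathrm{sign}(c_i)$ on the support of $x$ (arbitrarily elsewhere), obtaining
$$\phi_\epsilon(\hat\jmath(x))=\sum\frac{c_i}{a_i}\phi_\epsilon(j(a_ie_i))=\sum c_i\epsilon_i=\|x\|_{\ell_1},$$
so $\|\hat\jmath(x)\|\ge\|x\|_{\ell_1}$. Combined with $\|\hat\jmath\|\le 1$, this shows $\hat\jmath$ is isometric on a dense subspace of $\ell_1$ and hence everywhere, giving a linear isometric embedding $\ell_1\hookrightarrow Y$. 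The structural feature being exploited is that each $p_\epsilon$ is a ``simplex vertex'' of $K$ in the $\ell_1$ geometry, maximally far from $-p_\epsilon$, which forces $\phi_\epsilon$ to behave like the $\ell_\infty$ sign functional~$\epsilon$.
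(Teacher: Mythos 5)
Your proof is correct and follows essentially the same route as the paper: a compact ``cube'' $\{\sum_i \epsilon_i 2^{-i}e_i\}$ inside $\ell_1$, the linear operator sending $e_i$ to $2^{i}j(2^{-i}e_i)$, and norming functionals obtained by exploiting that equality cases in the triangle inequality are preserved by the isometry. The only difference is cosmetic: the paper uses the asymmetric cube of subsets of $\Ndb$ (coefficients in $\{0,1\}$) and chooses the functional norming $j(P)-j(Q)$ for the positive/negative supports of a given $\alpha$, while you use the symmetric $\{-1,0,1\}$ cube and the functional norming $j(p_\epsilon)-j(-p_\epsilon)$.
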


\begin{proof}
For $A\subset \mathbb N$, we define $\mu(A) = \sum_{k\in A} 2^{-k}=\|\sum_{k\in
A} 2^{-k}e_k\|_{\ell_1}$ where $(e_k)$ stands for the canonical basis of
$\ell_1$. We denote by $K$ the space of all subsets of $\mathbb N$ endowed with
the metric  $d(A,B) = \mu(A\backslash B)+\mu (B\backslash A)$. It is clear that
$K$ is isometric to a compact subset of $\ell_1$.

Assume now that $j: K\to Y$ is an isometric embedding of $K$ into some Banach
space $Y$. We may assume that $j(\emptyset)=0$. For any $n\in\mathbb N$, we set
$y_n = 2^n j(\{n\})$. Notice that $\|y_n\|=1$. For any $\alpha = (\alpha_k)\in
\ell_1$, we define $T\alpha = \sum \alpha_k y_k\in Y$. $T$ is clearly a
norm-one operator. Moreover, given $\alpha\in \ell_1$, we put $P=\{k\;;\;
\alpha_k>0\}$, $Q=\{k\;;\; \alpha_k<0\}$ and $y^*\in S_{Y^*}$ such that
$y^*(j(P)-j(Q))=d(P,Q)$. Since all the triangle inequalities are equalities, we
infer that $y^*(j(\{p\}))=2^{-p}$ and $y^*(j(\{q\}))=-2^{-q}$ for any $p\in P$,
$q\in Q$. Hence $y^*(T\alpha) = \sum |\alpha_k| $ and $T$ is a linear isometric
embedding.
\end{proof}

\noindent {\bf Questions.} We leave open the following questions:

(1) Is a Banach space always isometrically represented by its unit ball?

(2) Does every separable Banach space admit a compact isometrically
representing subset?

\end{document}